\begin{document}

\let\kappa=\varkappa
\let\epsilon=\varepsilon
\let\phi=\varphi
\let\p\partial

\def\Z{\mathbb Z}
\def\R{\mathbb R}
\def\C{\mathbb C}
\def\Q{\mathbb Q}
\def\P{\mathbb P}
\def\N{\mathbb N}
\def\L{\mathbb L}
\def\HH{\mathrm{H}}
\def\ss{X}

\def\conj{\overline}
\def\Beta{\mathrm{B}}
\def\const{\mathrm{const}}
\def\ov{\overline}
\def\wt{\widetilde}
\def\wh{\widehat}

\renewcommand{\Im}{\mathop{\mathrm{Im}}\nolimits}
\renewcommand{\Re}{\mathop{\mathrm{Re}}\nolimits}
\newcommand{\codim}{\mathop{\mathrm{codim}}\nolimits}
\newcommand{\id}{\mathop{\mathrm{id}}\nolimits}
\newcommand{\Aut}{\mathop{\mathrm{Aut}}\nolimits}
\newcommand{\lk}{\mathop{\mathrm{lk}}\nolimits}
\newcommand{\sign}{\mathop{\mathrm{sign}}\nolimits}
\newcommand{\pt}{\mathop{\mathrm{pt}}\nolimits}
\newcommand{\rk}{\mathop{\mathrm{rk}}\nolimits}
\newcommand{\SKY}{\mathop{\mathrm{SKY}}\nolimits}
\newcommand{\st}{\mathop{\mathrm{st}}\nolimits}
\def\Jet{{\mathcal J}}
\def\FC{{\mathrm{FCrit}}}
\def\sS{{\mathcal S}}
\def\lcan{\lambda_{\mathrm{can}}}
\def\ocan{\omega_{\mathrm{can}}}

\renewcommand{\mod}{\mathrel{\mathrm{mod}}}
\def\ds{\displaystyle}

\newtheorem{mainthm}{Theorem}
\newtheorem{thm}{Theorem}
\newtheorem{lem}[thm]{Lemma}
\newtheorem{prop}[thm]{Proposition}
\newtheorem{cor}[thm]{Corollary}

\theoremstyle{definition}
\newtheorem{exm}[thm]{Example}
\newtheorem{rem}[thm]{Remark}
\newtheorem{defin}[thm]{Definition}
\newtheorem{remark}[thm]{Remark}
\renewcommand{\thesubsection}{\arabic{subsection}}
\numberwithin{equation}{subsection}

\title[Types of connectedness of the constructive real number intervals] 
{Types of connectedness of the constructive real number intervals} 
\author[V.~Chernov]{Viktor Chernov}
\address{V.~Chernov, St Petersburg State University of Economics, Department of applied mathematics and economico-mathematical methods \\ 21 Sadovaya, St Petersburg 191023, Russia} 
\email{viktor\_chernov@mail.ru}

\subjclass{Primary 03D78; Secondary 03F60}

\begin{abstract} We study different notions of connected constructive metric spaces. They differ the types of connected components and how different components relate to each other. These notions are equivalent in classical point set topology but they give differ in the constructive world. In particular the interval of constructive real number appears to be connected if we use some of the definitions of a connected space and it is not connected when we use other definitions. 

This study is the continuation of the previous work of the author inspired by the question of Andrej Bauer about properties of locally constant functions. 
\end{abstract}


\maketitle

\section{Introduction}
Constructive Topology and Constructive Analysis study objects that can be computed using a certain algorithm, for example a Turing Machine. 
A Constructive Real Number (CRN) is an algorithmically given Cauchy sequence of rational numbers $\{x_n\}_{n=1}^{\infty}$ equipped with another algorithm that guarantees Cauchy convergence that is given any positive rational $\epsilon>0$ the algorithm constructs a nautral number $M$ such that for all $m,n>M$ we have $|x_m-x_n|<\epsilon.$ Two CRNs are equal if the members of the two Cauchy sequences with sufficiently large indexes are as close as desired. It is not hard to show that if two CRNs can not be not equal then they are equal. 
A Constructive Function (CF) is an algorithm that transforms CRNs to CRNs and of course it is required that equal CRNs are transformed to equal CRNs. All the number and functions in this paper are assumed to be constructive. 

A complete separable constructive metric space can be given by specifying an algorithmically enumerable set $P$ and a constructive metric on this set. The points of the space are algorithmically given Cauchy sequences of the elements of $P$ equipped with the algorithm that regulates convergence in itself. The metric can be naturally extended to this metric space and the set $P$ in this case appears to be everywhere dense. 
Constructive Mathematics was developed in the works of A. A. Markov \cite{Markov1, Markov2}, N.~A.~Shanin~\cite{Shanin} and their students and followers, see for example B.~A.~Kushner~\cite{Kushner}. A different but in many aspects similar approach to constructive mathematics was developed by E.~Bishop~\cite{Bishop} and his followers. 

In Constructive Mathematics a statement is not generally equivalent to its double negation. Markov's principle says that such an equivalence exists in the special situation: if an element of an algorithmically enumerable set can not not exist then it is possible to find such an element algorithmically. In other words every nonempty enumerable set is ``inhabitable''. In our proofs we will use Markov's principle and in this respect the allowed methods of the Russian Constructivism school are broader than those of Bishop's school.

Constructive versions of many classical results appear to be false. For example the constructive version of the intermediate value theorem and of the Brower fixed point theorem are wrong~\cite{Ceitin2},~\cite{Orevkov}. 
On the other side many statements that are false in the traditional classical mathematics appear to be true in the constructive world. For example every constructive function defined on the set of constructive real numbers is continuous~\cite{Ceitin1}. 

Our study of connectedness of an interval is based on constructive topological notions. An {\it open set\/} is a set of points equipped with an algorithm that given a point computes an interval with rational end points contained in this set. A set is {\it closed\/} if it is a compliment of an open set. Note that there are other very promising approaches to the constructivization of pointset toplogy. In the approach based on Abstract Stone Duality, see the works of Bauer and Taylor~\cite{BauerTaylor, Taylor},  and also Vickers~\cite{Vickers}, the topological structures are developed via a point free approach. This allows one to find constructive versions of the classical theorems. 

In this work we do not develop the general notions of constructive topological spaces and restrict ourselves to the study of the connectedness of an interval of constructive real numbers.

We will use the following abbreviations: CMS - Constructive Metric Space, $B(x,n)$ - an open ball of radius $2^{-n}$ centered at $x$.

\section{Definitions and Remarks}

\begin{defin} Let $X$ and $Y$ be two CMS and $F:X\to Y$ a constructive map. The map $F$ is {\it locally constant\/} if there is an algorithm $G$ transforming the points $x\in X$ to natural numbers such that all values $F(x)$ for $x\in B(x, G(x))$ are equal. The map $F$ is {\it potentially locally constant\/} if for every $x\in X$ there can not not be a natural number $n$ such that all the values of $F$ in the points of the ball $B(x,n)$ are equal. Clearly every locally constant function is potenatially locally constant.
\end{defin}

Theorem~\ref{theorem1} that generalizes Theorem 1 of our work~\cite{Chernov} shows that not only locally constant but even potentially locally constant functions given on an interval of CRNs are in fact constant functions. 
	
\begin{defin} Let $A$ be a set of points of CMS $X$. We say that this set $A$ is {\it open} in $X$ if one can specify an algorithm $G:A\to \N$ that transforms a point $a\in A$ to a natural number $G(a)$ such that $B(a, G(a))\subset A.$

A set $A$ is {\it pseudo open} in $X$ if for every point $a\in A$ there can not not exist a natural number $n$ such that $B(a,n)\subset A.$

A set $A$ is {\it closed} in $X$ if it is a compliment of an open set. 

A set $A$ is {\it sequentially closed\/} in $X$ if every algorithmically given sequence of points of $A$ that converges constructively to some limit has this limit in $A.$
\end{defin}

\begin{remark}
In classical point set Topology the notions of open and pseudo open subsets are equivalent, similarly for metric spaces the notions of closed and sequentially closed subsets are also equivalent. In the Constructive Topology this is not so even in the case of a constructive real line. Let us not a few simple relations between these notions. 
\begin{itemize}
\item A compliment to any open set is closed, the compliment to any closed set is pseudo open;
\item Every open subset is pseudo open, but the converse statement fails. The set $B$ from Theorem~\ref{theorem5} is pseudo open, but it is not open by Theorem~\ref{theorem3};
\item Every closed set is sequentially closed, but the converse statement is false. Set $A$ of Theorem~\ref{theorem5} is sequentially closed but it is not closed by Theorem~\ref{theorem4}.
\end{itemize}
\end{remark}

\begin{defin}
A subset of a CMS is {\it suitable} if it together with every point of the CMS contains all points equal to it~\cite{Chernov}. 
Consider two nonempty nonintersecting suitable subsets of CMS $X.$ Let us formulate for them the three possible conditions of being complimentary subsets. 

We call $X$ {\it hard separated} into these two subsets if every point of $X$ belongs to their union. 

We say that $X$ is {\it weakly separated\/} into these two subsets if the statement that a point of $X$ does not belong to one of these sets implies that it belongs to the other subset. 

We say that $X$ is {\it softly separated\/} into these two subsets if every point of $X$ can not not belong to their union.
\end{defin}

\begin{remark}
In classical mathematics these three types of separation are equivalent. If these two subsets are open (or closed) then in each of the three types of the space separation the total space is not connected. The Constructive version of this observation fails and the three types of separation yield different consequences. 
The hard separation assumes existence of the algorithm that given a point of $X$ tells which of the two subsets the point belongs to. The soft and the weak separations do not assume the existence of such an algorithm. 

\begin{itemize}
\item The property of being soft complimentary can be formulated as a double negation of both hard and soft complimentary. Namelly, if two nonempty and non intersecting subsets of a space softly separate it, then the statement that a point does not belong to one of the two subsets implies that it can not not belong to the other subset. 
\item Hard separation of a space implies weak separation that in turn implies soft separation. The converse statements are generally false. Soft and weak separations do not have to hard ones. Theorem~\ref{theorem2} the constructive interval $[a,b]$ can not be hard separated into two suitable subsets. However one can weakly (and hence softly) separate it into a closed $[a,c]$ and a half open $(c,b]$. 

The soft separation does not have to be a weak separation as one sees from the following example. We call a CRN {\it quasi rational} if it equals to some rational number. Let $A$ be the set of quasi rational numbers in an interval and $B$ be the complimentary set of CRNs in this interval. The interval is  softly separated into $A$ and $B$ but it is not weakly separated into these two subsets.  
\item In~\cite[Theorem 3]{Chernov} of the author we formulated the statement that it is impossible to separate the constructive interval into sequentially closed subsets. 
This statement appears to be wrong. In the current work we show that it is impossible to do a hard separation of the constructive interval into any subsets (Theorem~\ref{theorem2}). We consider four types of subsets: open, closed, pseudo open and sequentially closed. 

In Theorems~\ref{theorem3},~\ref{theorem4} we show that a constructive interval can not be softly (or weakly) separated into two open or two closed subsets. However it is possible to do a weak and soft separation of the interval into pairs of subsets of other types. In the proof of Theorem~\ref{theorem5} the interval is weakly and softly separated into an open and a pseudo open subset (and hence into two pseudo open ones) and it is separated into a closed and a sequentially closed subset (and hence into two sequentially closed subsets).
\end{itemize}
\end{remark}

\section{Theorems and Lemmas}

We will use the following 

\begin{lem}[\cite{Kushner}]\label{lemma1}
If two CRS can not be not equal then they are equal.
\end{lem}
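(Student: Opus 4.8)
The plan is to reduce equality of two CRNs to a $\Pi^0_1$ statement over a decidable matrix, after which the double-negation elimination is a triviality of intuitionistic logic and needs no appeal to Markov's principle.

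First I would unwind the definitions. Write the two CRNs as $x=\{x_n\}$ and $y=\{y_n\}$ with their attached convergence regulators $\alpha$ and $\beta$, so that $|x_m-x_n|<2^{-k}$ whenever $m,n\ge\alpha(k)$, and similarly for $y$ and $\beta$. By definition $x=y$ says that $|x_n-y_n|$ can be made as small as desired for large $n$. Using the regulators I would show that this is equivalent to the single sentence $\forall k\in\N\,R(k)$, where $R(k)$ is the proposition
\[
\bigl|x_{\alpha(k+2)}-y_{\beta(k+2)}\bigr|\le 2^{-k}.
\]
The implication $x=y\Rightarrow\forall k\,R(k)$ and its converse are both routine $\epsilon/3$ estimates: one splits $|x_{\alpha(k+2)}-y_{\beta(k+2)}|$ (respectively $|x_n-y_n|$) through a sufficiently far-out common index and bounds the three resulting terms using the regulators and, for the forward direction, the hypothesis $x=y$. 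The essential gain is that, for each fixed $k$, $R(k)$ is a comparison of two explicitly computable rational numbers, hence a decidable proposition.

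The logical core is then immediate. Assume $x$ and $y$ cannot be not equal, that is $\neg\neg(x=y)$, equivalently $\neg\neg\,\forall k\,R(k)$. The intuitionistic tautology $\neg\neg\,\forall k\,R(k)\to\forall k\,\neg\neg R(k)$ gives $\neg\neg R(k)$ for every $k$; since $R(k)$ is decidable we have $R(k)\vee\neg R(k)$ and therefore $\neg\neg R(k)\to R(k)$. Hence $\forall k\,R(k)$, which by the first step means exactly $x=y$.

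The only point that needs genuine care is the first step: the index shift $k\mapsto k+2$ in $R(k)$ must be chosen so that the equivalence with $x=y$ holds in both directions while $R(k)$ stays a pure rational inequality; everything after that is formal. I would also note explicitly that, in contrast with most of the arguments in this paper, this one does not invoke Markov's principle: decidability of rational comparisons already furnishes the double-negation elimination used at the level of each $R(k)$, and $\Pi^0_1$ double-negation elimination over a decidable matrix is intuitionistically valid.
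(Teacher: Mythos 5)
Your proof is correct. The paper gives no argument of its own for this lemma---it is quoted from Kushner---but your reduction of $x=y$ to a $\Pi^0_1$ sentence $\forall k\,R(k)$ with a decidable rational matrix (using the convergence regulators to eliminate the existential quantifier), followed by the intuitionistic tautology $\neg\neg\forall k\,R(k)\to\forall k\,\neg\neg R(k)$ and double-negation elimination on each decidable $R(k)$, is exactly the standard argument, and your observation that Markov's principle is not needed for this particular step is accurate.
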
	

\begin{thm}\label{theorem1}
Every constructive potentially locally constant function $f$ with constructive input and output variables given on an interval is a constant function on this interval.
\end{thm}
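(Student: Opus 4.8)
The plan is to exploit the celebrated continuity theorem of Ceitin (every constructive function on $[a,b]$ is continuous, indeed effectively uniformly continuous on a compact interval), together with Markov's principle, to upgrade ``potentially locally constant'' to ``constant''. First I would fix the interval $[a,b]$ and a potentially locally constant $f$. By the definition of potential local constancy, for each $x\in[a,b]$ it cannot be the case that no neighbourhood $B(x,n)$ on which $f$ is constant exists; since the predicate ``$f$ is constant on $B(x,n)$'' is (thanks to Ceitin continuity and the decidability-up-to-arbitrary-precision of CRN comparisons) an algorithmically testable property in $n$ — more precisely, ``$f$ is \emph{not} constant on $B(x,n)$'' is semidecidable because a genuine difference $f(y)\ne f(y')$ with $y,y'\in B(x,n)$ is witnessed at finite stage — Markov's principle applies and yields, for each $x$, an actual $n=G(x)$ with $f$ constant on $B(x,n)$. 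In other words, \emph{a potentially locally constant function on a constructive interval is already locally constant}, and the conclusion then follows from Theorem~1 of~\cite{Chernov} (the case of genuinely locally constant $f$).

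Since the present theorem is billed as a \emph{generalization} of that earlier result, I would also spell out the self-contained argument for the locally constant case, as one likely wants the full proof in one place. Here the idea is a standard connectedness-by-finite-chains argument made constructive. Given the locality modulus $G$, cover $[a,b]$: for any two points $x,y\in[a,b]$ one wants $f(x)=f(y)$. Introduce the auxiliary real function $g(t)=f(a+t(b-a))$ on $[0,1]$, which is again constructive and locally constant, with an explicit modulus. The key step is to show the set $\{t\in[0,1] : g(t)=g(0)\}$ is ``both closed and open enough'' to exhaust $[0,1]$: local constancy gives that for each $t$ there is a dyadic radius on which $g$ agrees with $g(t)$, so one can walk from $0$ to $1$ in finitely many overlapping such balls. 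To make this effective one must produce the finite chain algorithmically; this is where some care is needed, because a priori the radii $2^{-G(\cdot)}$ are not bounded below.

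The main obstacle, therefore, is exactly this lack of a uniform lower bound on the local-constancy radius — the naive ``compactness'' step is not constructively free. I expect to handle it as in~\cite{Chernov}: rather than extracting a uniform modulus, argue by contradiction combined with Markov's principle. Suppose $g(0)\ne g(1)$; then (CRN apartness being semidecidable) this apartness is witnessed at a finite precision. Using Ceitin continuity of $g$, bisect $[0,1]$: on at least one half, the two endpoint values are still apart, and one produces, by recursion, a nested sequence of intervals shrinking to a point $t^\*$ at which $g$ fails to be locally constant — contradicting the hypothesis that $G$ is a genuine locality modulus at $t^\*$. (The bisection is effective because at each stage ``the left half still has apart endpoint values'' is semidecidable, and by potential local constancy — now upgraded to local constancy — one of the two alternatives must hold, so Markov's principle lets us decide which.) This yields $g(0)=g(1)$, and running the same argument with $0$ replaced by an arbitrary $t_0$ gives $g(t)=g(t_0)$ for all $t$, i.e. $f$ is constant on $[a,b]$.

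One technical point I would be careful to state precisely: the reduction ``potentially locally constant $\Rightarrow$ locally constant'' must be justified \emph{uniformly in $x$}, i.e. one needs $G$ to be a single algorithm, not a family of proofs. This is fine: the search ``find the least $n$ such that no difference $f(y)\ne f(y')$ with $y,y'\in B(x,n)$ is found within $n$ steps of search'' is a total algorithm once Markov's principle guarantees termination, and Markov's principle is available to us by the standing assumptions of the Russian constructivist setting declared in the introduction. With that in hand the theorem is proved.
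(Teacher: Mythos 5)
Your first step --- upgrading ``potentially locally constant'' to ``locally constant'' via Markov's principle --- does not work, and this is a genuine gap. Markov's principle extracts a witness from $\neg\neg\exists n\,Q(n)$ only when $Q(n)$ is decidable (or at least semidecidable). The predicate you need, ``$f$ is constant on $B(x,n)$'', is the \emph{negation} of a semidecidable predicate (a universal statement over all pairs of points of the ball, with CRN equality itself a negative statement), so it has the wrong logical form; the observation that its complement is semidecidable points in exactly the wrong direction. Your proposed repair at the end --- search for the least $n$ such that no difference is found within $n$ steps --- terminates but certifies nothing: failing to find a difference within $n$ steps is not evidence that $f$ is constant on $B(x,n)$. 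So you cannot assume a locality modulus $G$ exists, and the chain/covering argument built on $G$ is not available.

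Fortunately, your fallback bisection argument is essentially the paper's actual proof, and it needs neither the reduction nor Ceitin's theorem. Assume $f(p)\neq f(q)$; cotransitivity of apartness (compute $f$ at the midpoint to precision below a known rational lower bound for $|f(p)-f(q)|$) lets you select a half with apart endpoint values, and iterating gives nested intervals shrinking to a point $d$ with $f(l_k)\neq f(r_k)$ for every $k$. This construction directly refutes ``there exists $n$ such that $f$ is constant on $B(d,n)$'', since any such ball would contain some pair $l_k, r_k$; potential local constancy asserts precisely the double negation of that statement, so the two contradict outright, with no witness extraction needed. Hence $\neg\bigl(f(p)\neq f(q)\bigr)$, and you must then invoke the stability of CRN equality (Lemma~\ref{lemma1}) --- a step you omit --- to conclude $f(p)=f(q)$. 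If you restructure your argument this way, dropping the first reduction entirely and applying the hypothesis only in its double-negated form at the limit point, it coincides with the paper's proof.
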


\begin{proof}
Let us assume that the function $f$ is not constant. Then there can not not exist two points $p,q$ of the interval such that $f(p)\neq f(q).$ Without the loss of generality 
$p<q$ and we put $r=(p+q)/2.$ We compute $f(r)$ with the precision high enough that would allow us to determine which one of the statements $f(r)\neq f(q)$ and $f(r)\neq f(p)$ is true. We take one of the two halves $[p,r]$ and $[r,q]$ of the interval $[p,q]$ for which the function takes different values on its ends. We continue the construction in a similar fashion to get a contracting sequences of nested intervals. The values of $f$ on the left and the right end points of the intervals are different. However the sequence of the left end points and of the right end points gives the same constructive number $d.$ Since the function $f$ is potentially locally constant there can not not exist an open neighborhood of $d$ where the values of $f$ are not all equal to $f(d).$ Thus there can not not exist a natural number $N$ such that after it all the members of the two sequences of the values of $f$ (on the left and right interval end points) are equal. Thus we get a contradiction. Function $f$ can not be non constant, its values at different points can not not be equal. By Lemma~\ref{lemma1} they are equal.
\end{proof} 

\begin{thm}\label{theorem2}
An interval $I$ of $CRNs$ can not be hard separated into any suitable subsets.
\end{thm}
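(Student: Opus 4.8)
The plan is to derive a contradiction from the assumption that $I = [a,b]$ is hard separated into two nonempty nonintersecting suitable subsets $A$ and $B$, by exhibiting a potentially locally constant function which is not constant, contradicting Theorem~\ref{theorem1}. Since $A$ and $B$ are suitable (they respect equality of CRNs) and every point of $I$ lies in exactly one of them, the recipe ``output $0$ if $x\in A$, output $1$ if $x\in B$'' defines a genuine constructive function $f\colon I\to\{0,1\}$: the hard separation hypothesis is precisely the assertion that there is an algorithm deciding, for each point, which subset it lies in, and suitability guarantees that equal inputs give equal outputs. This $f$ is not constant because $A$ and $B$ are both nonempty, so there exist $p\in A$, $q\in B$ with $f(p)=0\neq 1=f(q)$.

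The remaining point is to check that $f$ is potentially locally constant, i.e.\ that for every $x\in I$ it cannot not happen that some ball $B(x,n)$ has all $f$-values equal. This is where I expect the only real subtlety to lie. I would argue as follows: fix $x\in I$; without loss of generality $f(x)=0$, i.e.\ $x\in A$. Suppose, for contradiction, that for every $n$ the ball $B(x,n)$ contains a point where $f$ takes the value $1$; using the algorithm underlying the hard separation together with Markov's principle, one can then algorithmically produce, for each $n$, a point $y_n\in B(x,n)$ with $y_n\in B$. The sequence $\{y_n\}$ converges constructively to $x$. Now $f$ is a constructive function on $I$, hence continuous by Ceitin's theorem, so $f(y_n)\to f(x)$; but $f(y_n)=1$ for all $n$ while $f(x)=0$, which is absurd. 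Therefore the assumption was false: it is not the case that every ball around $x$ contains a point of the other colour, i.e.\ it cannot not be that some $B(x,n)$ is monochromatic. Since this holds for every $x$, $f$ is potentially locally constant.

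Having established that $f$ is potentially locally constant and not constant, Theorem~\ref{theorem1} is contradicted, so the hard separation cannot exist. I would note that the argument uses nothing about the internal structure of $A$ and $B$ beyond suitability and the existence of the deciding algorithm, so it applies uniformly whether the subsets are open, closed, pseudo open, sequentially closed, or of any other type — which is exactly the generality claimed in the statement.

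The main obstacle I anticipate is the step producing the convergent sequence $\{y_n\}$ and justifying it constructively: one must be careful that ``for every $n$ there exists a point of $B$ in $B(x,n)$'' can actually be turned into an algorithm enumerating such points, which is where Markov's principle (applicable because the points of the metric space form an algorithmically enumerable dense set and the deciding algorithm lets us recognise membership in $B$) and the constructive regulator of convergence are used. An alternative route that sidesteps Ceitin's theorem would be to feed $f$ directly into the bisection argument from the proof of Theorem~\ref{theorem1} by hand, but invoking Theorem~\ref{theorem1} as a black box is cleaner.
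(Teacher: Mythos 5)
Your proposal is correct and follows essentially the same route as the paper: the hard separation yields a total constructive $\{0,1\}$-valued function on $I$, Ceitin's continuity theorem applies to it, Theorem~\ref{theorem1} then forces it to be constant, and this contradicts the nonemptiness of both subsets. The one place you diverge is the verification of (potential) local constancy, where your detour through Markov's principle and a convergent sequence $\{y_n\}$ is both unnecessary and the shakiest part of your argument (extracting $y_n$ from an arbitrary suitable subset requires care); the paper simply observes that a continuous function into $\{0,1\}$ is automatically locally constant, since taking $\epsilon<1$ in the modulus of continuity gives a ball on which all values coincide.
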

	
\begin{proof} Let us assume that the interval is hard separated into two suitable subsets $A$ and $B.$ This means that it is possible to construct an algorithm $G$ transforming the points of the interval into $\{0,1\}$ and $G(A)=0, G(B)=1.$ The algorithm $G$ is a constructive function thus it is continuous by the result of Ceitin~\cite{Ceitin1}. Thus it is locally constant. Theorem~\ref{theorem1} says that this function is in fact constant. Hence one of the subsets $A$ and $B$ is empty which contradicts to the definition of separation. 
\end{proof}

\begin{lem}\label{lemma2}
Let $X$ be a complete separable CMS, $D$ be its enumerable everywhere dense subset, $A$ be a suitable enumerable subset of points $X.$ Then provided that $A\neq \emptyset$ one can construct a point of $D\cap A.$
\end{lem}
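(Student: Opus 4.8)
The plan is to replace the non-emptiness hypothesis by an explicit point $a\in A$, to prove that an enumerable suitable set is effectively open, and then to approximate $a$ from within $D$, the approximant necessarily landing in $A$.

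First, note that $A$ being enumerable supplies an algorithm which, run on a name of a point $x$, halts exactly when $x\in A$. Since moreover $A\neq\emptyset$, Markov's principle applied to $A$ (in the form recalled in the introduction) produces a concrete point $a\in A$ together with a name for it.

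The key step is to show that $a$ has a ball $B(a,N)\subset A$ with $N$ computable from the name of $a$. Run the membership algorithm on the name of $a$; since $a\in A$ it halts, and its computation inspects only a finite initial segment of the name --- finitely many of the approximating basic points and finitely many values of the convergence regulator. By the reasoning underlying Ceitin's continuity theorem~\cite{Ceitin1}, any point $y$ close enough to $a$, say with $|y-a|<2^{-N}$, admits a valid name agreeing with the name of $a$ on that inspected segment; the algorithm then halts on this name of $y$ as well, and since $A$ is suitable --- membership does not depend on the choice of name --- we conclude $y\in A$. Hence $B(a,N)\subset A$. I expect this ``continuity of a computation'' argument to be the only delicate point; producing the matching name of $y$ is routine but has to be arranged so that the convergence regulator of the name is still obeyed (a standard index-shifting manoeuvre).

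Finally, since $D$ is everywhere dense and enumerable, one can effectively find $d\in D$ with $|d-a|<2^{-N}$: compute a good enough approximation to $a$ and search the enumeration of $D$ for an element whose distance to that approximation is confirmably smaller than $2^{-N}$. Then $d\in D$ and $d\in B(a,N)\subset A$, so $d\in D\cap A$; every step being effective, this produces the required point. One can even bypass the explicit $N$: knowing merely that \emph{some} ball about $a$ lies in $A$, run the membership algorithm in parallel on names of successive $D$-approximants of $a$, and output the one on which a run halts.
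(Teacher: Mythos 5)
Your proof hinges on the intermediate claim that from a name of $a\in A$ one can compute $N$ with $B(a,N)\subset A$, i.e.\ that a suitable enumerable set is effectively open at each of its points. This is a genuine gap. The finite-use argument you give (``the halting computation inspects only a finite initial segment of the name, hence halts on any name agreeing on that segment'') is valid when the membership algorithm accesses the approximating sequence as an oracle; but in the setting of this paper a point is an algorithmically given Cauchy sequence, so a ``name'' is an index of a pair of algorithms, and the membership semi-decider receives that index as a finite object and may use it in ways not determined by any finite portion of the sequence it computes. Passing from ``halts on one name of $a$'' to ``halts on some name of every sufficiently close $y$'' is precisely the content of Ceitin-type continuity arguments~\cite{Ceitin1}; these are not routine, they require Markov's principle and recursion-theoretic machinery, and for semi-decidable \emph{sets} (as opposed to everywhere-defined functions) they do not yield openness: an enumerable suitable subset of the constructive reals need not contain a ball about each of its points. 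So the step you flag as ``the only delicate point'' is an unproved, and in general false, assertion; your closing fallback still presupposes it (``knowing merely that \emph{some} ball about $a$ lies in $A$'').

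The paper's proof avoids openness entirely. Given $a\in A$ and a sequence $d_k\in D$ converging to $a$, it builds for each $n$ a computable convergent sequence which converges to $a$ when $n$ lies outside the undecidable domain $E$ of a fixed algorithm, and stabilizes at some $d_m\in D$ when $n\in E$; since the limit lies in $A$ for every $n\notin E$, the capture principle forces the limit into $A$ for some $n\in E$ as well, and that limit is an element of $D$, hence of $D\cap A$. Markov's principle then removes the need to exhibit $a$ in advance (your opening reduction is fine and matches the paper's closing step in spirit). To salvage your route you would have to either restrict the lemma to sets presented as enumerable unions of rational balls --- which is all that the application in Theorem~\ref{theorem3} requires, and for which openness is given rather than derived --- or replace the finite-use step by the capture-principle argument.
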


\begin{proof} Let $P$ be an algorithm that transforms natural numbers to natural numbers and that has an undecidable domain $E,$ see~\cite{Ceitin1, Kleene, Kushner}. 
We consider the step by step process of applying $P$ to a number $n.$ We put $Q(n, k) = k,$  if algorithm $P$ did not yet finish working on an integer $n$ by step $k$, and otherwise we put $Q(n, k) = m,$ where $m$ is the step number on which $P$ produced the result on input $n.$ 

Let $a\in A$ and $\{d_k\}_{k=1}^{\infty}$ be a sequence of elements of $D$ converging to $a.$ For each natural $n$ we define a sequence
$\{x_{n_k}\}_{k=1}^{\infty}$ of the points of the space $X$ via the formula $x_{n_k}= d_{Q(n,k)}.$ If $n\not \in E$ then the sequence $x_{n_k}$ converges to $a.$
If $n\in E$ then the sequence $x_{n_k}$ stabilizes after a certain $k$ on the corresponding members of the sequence $\{d_k\}_{k=1}^{\infty}.$ 

In both cases the sequence $x_{n_k}$ converges. If $n\not \in E$ then the limit belongs to the enumerable subset $A.$ Thus there exists $n\in E$ for which the limit i.e. the element of $D$ belongs to $A$, by the so called capture principle~\cite{Ceitin1, Kushner}. 

Thus given $a\in A$ we can find $d\in D\cap A.$ If $a\in A$ can not not exist then this $d$ also can not not exist. 
 $C\cap A$ is enumerable so the Markov's principle allows us to find $d\in D\cap A.$
\end{proof}

\begin{thm}\label{theorem3}
An interval $I$ of CRNs can not be softly separated into two open subsets. 
\end{thm}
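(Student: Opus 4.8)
The plan is to argue by contradiction using Lemma~\ref{lemma2} together with Theorem~\ref{theorem1}. Suppose the interval $I$ is softly separated into two nonempty nonintersecting suitable open subsets $A$ and $B$. Softly separated means every point of $I$ can not not belong to $A\cup B$. The key idea is to manufacture from this data a potentially locally constant function on $I$ and then invoke Theorem~\ref{theorem1}. Since $A$ and $B$ are open, each comes equipped with an algorithm $G_A:A\to\N$ and $G_B:B\to\N$ witnessing that a ball around each of its points is contained in it. Define $f:I\to\{0,1\}$ by attempting, for a given $x\in I$, to determine whether $x\in A$ or $x\in B$; on $A$ set $f=0$ and on $B$ set $f=1$. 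The subtlety is that $f$ need not be a total constructive function in the strict sense because we cannot algorithmically decide membership, so instead I would work with the \emph{potential} notion throughout.

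The cleaner route avoids defining $f$ globally. First I would use Lemma~\ref{lemma2}: applied to the interval $I$ (a complete separable CMS) with $D$ the rationals in $I$, since $A\neq\emptyset$ is a suitable enumerable subset we obtain a rational point $a\in D\cap A$, and likewise a rational point $b\in D\cap B$. So $A$ and $B$ each contain a rational point of $I$; without loss of generality $a<b$. Now I would run the bisection argument familiar from the proof of Theorem~\ref{theorem1} on the interval $[a,b]$. The point of the bisection is this: given a rational midpoint $r=(a+b)/2$, we want to decide on which side to recurse. For this we need to know whether $r\in A$ or $r\in B$; but soft separation only tells us this can not not fail, so we cannot decide it directly. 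Instead, the contradiction I would aim for is the following: the soft separation hypothesis says that for the common limit point $d$ of the nested intervals, $d$ can not not lie in $A\cup B$; if $d\in A$ then by openness of $A$ (with witness $G_A$) a whole ball $B(d,G_A(d))\subset A$, and symmetrically for $B$. So there can not not exist a neighborhood of $d$ lying entirely in one of the two sets.

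The actual mechanism, then, is to build the nested sequence so that the left endpoints always lie in $A$ and the right endpoints always lie in $B$, keeping the property an invariant rather than decided at each step. Starting from $a\in A$, $b\in B$: at the midpoint $r$, we compute approximations and try both possibilities in parallel — if we ever verify $r\in A$ (via enumerability of $A$ as a suitable set, or via $r$ being rational and $A$ enumerable) we recurse on $[r,b]$; if we verify $r\in B$ we recurse on $[a,r]$. Since $r$ is rational and can not not lie in $A\cup B$, this dovetailed search can not not terminate, and by Markov's principle (as in Lemma~\ref{lemma2}) it does terminate, producing the next rational endpoints with the invariant preserved. Iterating gives a shrinking nested sequence of intervals whose left endpoints form a sequence in $A$ and right endpoints a sequence in $B$, both converging to a single CRN $d$. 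But $d$ can not not lie in $A$ or in $B$; in either case, by the openness witness, a ball $B(d,n)$ lies entirely in that one set, so can not not contain both a left endpoint (in $A$) and a right endpoint (in $B$) for large indices — contradicting $A\cap B=\emptyset$. Hence there can not not be a contradiction, so no such soft separation exists.

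The main obstacle will be making the bisection step constructively legitimate: we are not allowed to decide membership of the midpoint, so the step must be phrased as a terminating dovetailed search justified by Markov's principle applied to the enumerable set "proofs that $r\in A$ or $r\in B$", exactly in the spirit of Lemma~\ref{lemma2}'s closing argument. A secondary technical point is ensuring the midpoints stay rational (or at least stay in the dense enumerable set) so that the relevant membership sets are genuinely enumerable; choosing $a,b$ rational at the outset via Lemma~\ref{lemma2} handles this, since midpoints of rationals are rational. Everything else — completeness giving the limit $d$, suitability ensuring $d$'s membership is well defined, openness giving the contradiction-producing ball — is routine.
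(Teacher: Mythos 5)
Your proposal follows essentially the same route as the paper: apply Lemma~\ref{lemma2} to locate points of $A$ and $B$ in the dense set of rationals, bisect while maintaining the invariant that the left endpoints lie in $A$ and the right endpoints in $B$ (deciding the midpoint's side by a dovetailed search justified by Markov's principle, a step the paper leaves implicit), and derive the contradiction at the common limit from openness, suitability, and disjointness. The one ingredient you assert but never justify is the enumerability of $A$ and $B$, which both Lemma~\ref{lemma2} and your Markov-principle searches require; the paper obtains it by first presenting each open set as an enumerable union of intervals with rational endpoints via the constructive Lindel\"of theorem.
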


\begin{proof}
Let us assume that $A,B$ are two nonempty nonintersecting open subsets that softly separate the interval $I.$ We present each of the two open sets as the union of an enumerable collection of intervals with rational ends (this is possible to do by the Constructive version of the Lindeloef Theorem~\cite{Kushner}). 

The set of all CRNs that belongs to an open interval is enumerable, thus the set of all  CRNs that belongs to an enumerable union of open intervals is also enumerable. Thus the sets $A, B$ are suitable non empty and non intersecting enumerable subsets of the interval $I.$

We consider sets of rational numbers $A_0\subset A$ and $B_0\subset B.$

Using Lemma~\ref{lemma2} we can find an element in each of these sets $A_0$ and $B_0.$ Now we take an interval with end points in $A_0$ and $B_0.$ We divide it into two equal halves and take the one of the halves for which the ends belong to the different subset $A_0$ and $B_0.$
We continue this process and we get a contracting sequences of nested intervals the end points of which belong to two different sets $A_0$ and $B_0.$ These two sequences define equal CRNs. Since the sets $A$ and $B$ are suitable, both CRNs belong to one of them. These sets are open, thus together with the CRN the open set contains a tail of each of these two sequences. 

We get a contradiction and our assumption that one can softly separate the interval into two open subsets is false.
\end{proof}

\begin{cor}
An interval $I$ of CNRs can not be weakly separated into two open subsets.
\end{cor}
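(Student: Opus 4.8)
The plan is to derive this immediately from Theorem~\ref{theorem3} using the general implication, recorded in the second Remark, that weak separation always implies soft separation. First I would recall the definitions: if $A,B$ are nonempty, nonintersecting, suitable subsets of the interval $I$ that weakly separate $I$, this means that for any point $x\in I$, the assumption $x\notin A$ forces $x\in B$ (and symmetrically). I would then observe that this property entails soft separation, i.e. that every point of $I$ cannot-not belong to $A\cup B$: indeed, suppose for contradiction that some $x\in I$ satisfies $\neg(x\in A\cup B)$, that is, $x\notin A$ and $x\notin B$; but $x\notin A$ together with weak separation yields $x\in B$, contradicting $x\notin B$. Hence the assumption $\neg(x\in A\cup B)$ is contradictory, so $\neg\neg(x\in A\cup B)$ holds for every $x\in I$, which is exactly soft separation.

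Having shown that a weak separation of $I$ into two open subsets would in particular be a soft separation of $I$ into two open subsets, I would then invoke Theorem~\ref{theorem3}, which asserts that no such soft separation exists. This contradiction shows that $I$ cannot be weakly separated into two open subsets either, completing the proof.

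There is essentially no obstacle here: the corollary is a formal consequence of the logical relationship between the separation notions, and the only point requiring any care is the constructive step that $\neg(x\in A)\to (x\in B)$ (for all $x$) implies $\neg\neg(x\in A\cup B)$ (for all $x$) — but this is valid in intuitionistic logic, since from $\neg(x\in A\cup B)$ one extracts $\neg(x\in A)$, applies the weak-separation hypothesis to get $x\in B$, hence $x\in A\cup B$, contradicting the assumption. No appeal to Markov's principle or to continuity of constructive functions is needed for the reduction itself; all the substantive work is already contained in Theorem~\ref{theorem3}.
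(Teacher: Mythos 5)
Your proposal is correct and matches the paper's intended argument: the corollary is left without an explicit proof precisely because, as recorded in the remark preceding the theorems, weak separation implies soft separation, so the statement follows at once from Theorem~\ref{theorem3}. Your spelled-out intuitionistic verification that $\neg(x\in A)\to x\in B$ yields $\neg\neg(x\in A\cup B)$ is the right (and only) point needing care.
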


\begin{thm}\label{theorem4}
An interval $I$ of CRNs $I$ can not be softly separated into two closed subsets.
\end{thm}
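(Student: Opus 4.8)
The plan is to reduce this to Theorem~\ref{theorem3} by passing to complements. Suppose, for contradiction, that $I$ is softly separated into two nonempty, non-intersecting, suitable closed subsets $A$ and $B$. Write $A = I\setminus W_A$ and $B = I\setminus W_B$ with $W_A$, $W_B$ open, and set $U := I\setminus A$ and $V := I\setminus B$. The first step is to verify that $U$ and $V$ are genuinely \emph{open} subsets of $I$ (not merely pseudo open). This is the one step that essentially uses Markov's principle: for a CRN $x$, the property $x\in W_A$ is semidecidable, since it can be confirmed by finding, in the rational-interval enumeration of $W_A$ furnished by the constructive Lindeloef theorem (as in the proof of Theorem~\ref{theorem3}), an interval containing $x$; on the other hand $x\in U$ unwinds to $\neg\neg(x\in W_A)$. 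Markov's principle then gives $x\in W_A$, so $U = W_A$ is open, and symmetrically $V = W_B$ is open. (Note that the naive attempt to invoke Theorem~\ref{theorem2} directly fails: the complements $U$ and $V$ need not exhaust $I$ pointwise, so one only obtains a soft, not a hard, separation.)

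The second step is to check that $U$ and $V$ meet all the hypotheses of Theorem~\ref{theorem3}. They are suitable because $A$ and $B$ are: equal CRNs lie in the same balls, hence are simultaneously in or out of $A$, hence of $U$. They are nonempty: if $y\in B$ then, since $A\cap B=\emptyset$, one has $\neg(y\in A)$, i.e.\ $y\in U$, and symmetrically $V\neq\emptyset$. They do not intersect: if $x\in U\cap V$ then $\neg(x\in A)$ and $\neg(x\in B)$, so $\neg(x\in A\vee x\in B)$, contradicting the soft separation hypothesis $\neg\neg(x\in A\vee x\in B)$; hence $U\cap V=\emptyset$. Finally $U$ and $V$ softly separate $I$: from $A\cap B=\emptyset$ we have $\neg(x\in A\wedge x\in B)$ for every $x\in I$, and the intuitionistically valid equivalence $\neg(P\wedge Q)\leftrightarrow\neg\neg(\neg P\vee\neg Q)$ turns this into $\neg\neg(x\in U\vee x\in V)$.

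Thus $U$ and $V$ would be two nonempty, non-intersecting, suitable open subsets softly separating $I$, which is impossible by Theorem~\ref{theorem3}; this contradiction completes the proof. The only delicate point is the first step — that the complement of a closed subset of the constructive line is again open — because constructively a double complement need not recover the original set, and it is precisely here that Markov's principle and the semidecidability of membership in an open set enter; the rest is routine bookkeeping with double negations.
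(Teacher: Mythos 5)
Your proof is correct and follows essentially the same reduction to Theorem~\ref{theorem3} as the paper: pass to the complementary open sets, check that they are nonempty, disjoint, suitable, and softly separating, and then invoke Theorem~\ref{theorem3}. The only real difference is that your first step (Markov's principle plus the Lindel\"of enumeration to show $I\setminus A$ is open) can be skipped entirely: since the paper defines a closed set \emph{as} the complement of an open set, one can work directly with the witnessing open sets $W_A$ and $W_B$ instead of the double complements $U$ and $V$, and your remaining double-negation bookkeeping (which matches the paper's) goes through verbatim.
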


\begin{proof} We assume that the subsets $V,W$ are closed non empty non intersecting that softly separate the constructive interval $I.$ Each of these sets is a compliment of an open set that we denote by $A$ and $B$ respectively. Let us prove that $A$ and $B$ softly separate $I$ which would contradict to the previous Theorem.

First of all $A, B$ are both non empty. Indeed if $A$ is empty then $V=I$ ad then $W$ is empty.

Second of all $A$ and $B$ do not intersect. Indeed if $x\in A\cap B$, then $x\not \in V$ and at the same time $x\not \in W$ and the sets $V,W$ would not satisfy the condition of being softly complimentary.

Third of all $A$ and $B$ softly compliment each other: if $x\not \in A$, then $x$ can not not belong to $B$; similarly if $x\not \in B$, then $x$ can not not belong to $A.$ Indeed assume that $x\not \in A,$ then $x\in V$. Since $V\cap W=\emptyset$ we get that $x\not \in W.$ Thus $x$ can not not belong to $B.$ 

Thus if the constructive interval is softly separated into two closed sets then it is softly separated into two open sets. 
\end{proof}

\begin{cor} An interval $I$ of CRNs can not be weakly separated into two closed subsets.
\end{cor}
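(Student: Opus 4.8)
The plan is to derive this immediately from Theorem~\ref{theorem4} together with the hierarchy of separation notions recorded in the remarks. Recall that weak separation of a space into two nonempty nonintersecting suitable subsets implies soft separation into the same two subsets: if $x\notin A$ forces $x\in B$, then in particular $x\notin A$ forces the weaker conclusion that $x$ cannot not belong to $B$, and symmetrically; hence every point of $X$ cannot not belong to $A\cup B$. So any weak separation is in particular a soft separation.

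First I would assume, for contradiction, that the interval $I$ of CRNs is weakly separated into two closed subsets $V$ and $W$. By the observation above, this same pair $V,W$ then softly separates $I$. But $V$ and $W$ are closed, so this contradicts Theorem~\ref{theorem4}, which asserts precisely that $I$ cannot be softly separated into two closed subsets. Therefore no such weak separation exists.

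There is essentially no obstacle here: the only thing to be careful about is that the two subsets witnessing the weak separation are literally the same two subsets witnessing the soft separation (so that the "closed" hypothesis is preserved), and this is automatic since passing from weak to soft separation only weakens the logical strength of the complementarity condition while keeping the sets fixed. Thus the corollary is a one-line consequence of Theorem~\ref{theorem4}.

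\begin{proof}
If $I$ were weakly separated into two closed subsets $V$ and $W$, then $V$ and $W$ would also softly separate $I$: the assumption that a point does not lie in one of them implies it lies in the other, hence a fortiori it cannot not lie in the other, so every point cannot not belong to $V\cup W$. This contradicts Theorem~\ref{theorem4}.
\end{proof}
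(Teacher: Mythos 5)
Your proposal is correct and follows exactly the route the paper intends: the remark in Section 2 already records that weak separation implies soft separation (with the same two sets, so closedness is preserved), and the corollary is then an immediate consequence of Theorem~\ref{theorem4}. Your verification that $x\notin A$ and $x\notin B$ together contradict the weak-complementarity condition, yielding $\neg\neg(x\in A\cup B)$, is the constructively valid justification of that implication.
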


\begin{thm}\label{theorem5}
Let $\{s_n\}_{n=1}^{\infty}$ be a strictly increasing algorithmical sequence of rational numbers from the interval $[a,b]$ that does not have a constructive limit
(Specker Sequence,~\cite{Specker}). Consider two sequences of intervals $\{A_n\}_{n=1}^{\infty}$ and $\{B_n\}_{n=1}^{\infty}$ where $A_n=[a, s_n)$ and $B_n=[s_n,b].$ Put $A=\cup_n A_n$ and $B=\cap_n B_n.$

Then the interval $[a,b]$ is weakly separated by $A$ and $B.$ The set $A$ is open, pseudo open and sequentially closed. The set $B$ is closed, pseudo open and sequentially closed.
\end{thm}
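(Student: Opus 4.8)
The plan is to reduce every assertion to the explicit descriptions $A=\{x\in[a,b]:\exists n\,(x<s_n)\}$ and $B=\{x\in[a,b]:\forall n\,(s_n\le x)\}$, where $s_n\le x$ abbreviates $\neg(x<s_n)$. Both sets are suitable, since for a rational $s_n$ the relations $x<s_n$ and $\neg(x<s_n)$ are invariant under equality of constructive real numbers; $A$ is nonempty because $a\le s_1<s_2$ gives $a\in[a,s_2)$, and $B$ is nonempty because strict monotonicity forces $s_n<b$ for every $n$, so $b\in[s_n,b]$ for all $n$; moreover $A\cap B=\emptyset$ because $x<s_n$ and $s_n\le x$ cannot both hold. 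For a point $x$ of $[a,b]$ the statement $x\notin A$ unwinds to $\forall n\,(s_n\le x)$, which is exactly $x\in B$; this gives $x\notin A\to x\in B$ for free, and also the identity $B=[a,b]\setminus A$. In the other direction $x\notin B$ is $\neg\neg(x\in A)$, and since $\{n:x<s_n\}$ is an enumerable set, Markov's principle produces an actual $n$ with $x<s_n$, i.e. $x\in A$. Hence $[a,b]$ is weakly separated by $A$ and $B$.

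Openness of $A$ is direct: for $x\in A$ we search out an $n$ with $x<s_n$, at the certifying stage we also read off a positive rational $\delta$ with $x+\delta<s_n$, and then for $2^{-m}\le\delta$ the ball $B(x,m)$ (formed inside $[a,b]$) lies in $[a,s_n)\subseteq A$. Pseudo-openness of $A$ then follows from the general fact recorded in the Remark that every open set is pseudo open; and since $A$ is open with $B=[a,b]\setminus A$, the set $B$ is closed, hence (again by the Remark) sequentially closed. There remain two genuine points: $A$ is sequentially closed, and $B$ is pseudo open.

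Both rest on the same device --- a double negation removed by Markov's principle, combined with the defining property of the Specker sequence that $\{s_n\}$ has no constructive limit. For $A$: let an algorithmic sequence $y_k\in A$ converge constructively to $y$ and suppose $\neg(y\in A)$, i.e. $s_n\le y$ for every $n$. Given a rational $\epsilon>0$, the convergence modulus yields a $k$ with $y-\epsilon<y_k$; searching out $n_k$ with $y_k<s_{n_k}$ and using $s_{n_k}\le y$ gives $y-\epsilon<y_k<s_{n_k}\le y$, so $|y-s_m|<\epsilon$ for all $m\ge n_k$ (the $s_m$ increase and are bounded above by $y$). This is a computable convergence modulus exhibiting $y$ as a constructive limit of $\{s_n\}$, which is impossible; hence $\neg\neg(y\in A)$, and Markov's principle gives $y\in A$. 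For $B$: fix $x\in B$; it suffices to prove $\neg\neg\exists n\,\forall m\,(s_m+2^{-n}\le x)$, because $\forall m\,(s_m+2^{-n}\le x)$ already forces $B(x,n)\subseteq B$. Assuming the negation, for each $n$ we get $\neg\forall m\,(s_m+2^{-n}\le x)$; each $s_m+2^{-n}\le x$ is a negation, hence stable, so this upgrades to $\neg\neg\exists m\,(x<s_m+2^{-n})$, which Markov's principle converts (the relevant set of $m$ is enumerable) into an $m$ that a search provides computably in $n$, say $m(n)$; together with $s_{m(n)}\le x$ this yields $|x-s_m|<2^{-n}$ for all $m\ge m(n)$, again a computable modulus making $x$ a constructive limit of $\{s_n\}$ --- impossible. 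Therefore $\neg\neg\exists n\,\forall m\,(s_m+2^{-n}\le x)$, whence $\neg\neg\exists n\,(B(x,n)\subseteq B)$, i.e. $B$ is pseudo open.

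I expect the pseudo-openness of $B$ to be the main obstacle. Two points need care there: the topological condition $B(x,n)\subseteq B$ must first be traded for the concrete numerical inequality $\forall m\,(s_m+2^{-n}\le x)$ before Markov's principle can get a grip, and the passage from $\neg\forall m\,(s_m+2^{-n}\le x)$ to an honest witness $m(n)$ proceeds in two stages --- first exploiting that $\le$ is a negation, hence stable, to reach the doubly negated existential, then invoking enumerability and Markov's principle, and finally noting that the witness can be located by a search uniform in $n$. Everything else is unwinding the formulas for $A$ and $B$ and quoting the implications between the four notions listed in the Remark.
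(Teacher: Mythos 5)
Your proposal is correct and follows essentially the same route as the paper: weak separation via unwinding $B=[a,b]\setminus A$ plus Markov's principle, $A$ open by an explicit radius, and the two nontrivial claims ($A$ sequentially closed, $B$ pseudo open) both obtained by using Markov's principle to extract members of the Specker sequence arbitrarily close to the point in question, contradicting the absence of a constructive limit. The only (harmless) difference is that you quote the general implications open $\Rightarrow$ pseudo open and closed $\Rightarrow$ sequentially closed from the Remark where the paper re-verifies them directly, and you spell out the double-negation bookkeeping more explicitly than the paper does.
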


\begin{proof}

\begin{itemize}
\item By our construction the sets $A,B$ are non empty and do not intersect. Let us check that they satisfy the condition of being complimentary. Let $x\not \in A$, then $x\not \in A_n$ for all $n.$ Then $x\in B_n$ for all $n$ and hence $x\in B.$

Assume that $x\not \in B$, then there can not not exist $n$ such that $x\not \in B_n,$ this $x<s_n.$ The last relation is enumerable so by the Markov principle we can find such $n.$ Then $x$ is in the corresponding $A_n$ and hence $x\in A.$

\item By our construction the set $A$ is open and hence pseudo open. Let us show that it is sequentially closed at the same time. Consider the converging sequence $\{a_n\}_{n=1}^{\infty}$ of the points of $A$ with limit $a.$ Let us show that $a\in A.$ We observe that for every $n$ there is $m$ such that $a_n\in A_m,$ and thus $a_n<s_m.$ The members of the Specker sequence are bounded from below by the members of the sequence $a_n$. Since the sequence $s_n$ is strictly increasing we get that for every $n$ at least one of the inequalities $s_n<a$ and $a<s_{n+1}$ is true. If the first inequality holds for all $n$, then the Specker sequence $s_n$ has a limit
(the number $a$) which is not possible. Thus for some $m$ we have $a<s_m$ and hence $a\in A_m\subset A.$

\item The set $B$ is closed by our construction. Thus it also is sequentially closed. Indeed let us take a sequence of points $\{b_n\}_{n=1}^{\infty}$ that converges to a point $b.$ Let us show that $b\in B.$ For this is suffices to verify that $b\not\in A.$ If $b\in A$ then we can find a neighbourhood of $b$ that is contained in $A.$ This neighborhood does not contain any points of our sequence $b_n$ so $b$ can not be its limit.  

Let us show that $B$ is pseudo open. Take $b\in B$ and consider the sequence of concentric open balls $B(b,n)$. Let us show that it is impossible that there no open ball in this sequence that is contained in $B.$ Indeed assume that there is no such ball. Then every open ball $B(b,n)$ can not not contain a point of the set $A$, i.e. a point of the some $A_m$ for some $m$ and hence a member $s_m$ of the Specker sequence. The set of points of the ball $S(b,n)$ is enumerable, thus by the Markov's principle for each $n$ we can find such a member $s_m.$ Thus we can find a subsequence of the Specker sequence that converges to $b$. This is impossible so the ball in question can not not exist. Thus $B$ is pseudo open.

\end{itemize}

\end{proof}

\begin{remark}
A weak separation of a constructive interval is also its soft separtion. The separating sets $A$ and $B$ are easy to see: each of them together with any two points also contains all the points between these two points. However these sets are not constructive intervals since the point where these sets are adjacent to each other does not have to be a CRN.
\end{remark}

{\bf Acknowledgement:} The author is thankful to Andrej Bauer for posing a question that initiated this research and to my son Vladimir Chernov for motivating me to write this paper and help with the translation of the text to English.

\end{document}